\newtheorem{theo}{Theorem}
\newtheorem{lema}[theo]{Lemma}
\newcommand{\CC}{{\mathbb{C}}}
\newcommand{\calA}{{\mathcal{A}}}
\newcommand{\calB}{{\mathcal{B}}}
\newcommand{\calX}{{\mathcal{X}}}
\newcommand{\calY}{{\mathcal{Y}}}
\newcommand{\calZ}{{\mathcal{Z}}}
\newcommand{\comp}{{\circ}}
\begin{document}
\title[]{On algebraic representatives of homeomorphism types of analytic hypersurface germs}
\author{Javier Fern\'andez de Bobadilla}
\address{ICMAT. CSIC-Complutense-Aut\'onoma-Carlos III}
\email{javier@mat.csic.es}
\thanks{Research partially supported by the ERC Starting Grant project TGASS and by Spanish Contract MTM2007-67908-C02-02. The author thanks to the Faculty de Ciencias Matem\'aticas of the Universidad Complutense de Madrid for excellent working conditions.}
\date{26-2-2010}
\begin{abstract}
A question of B. Teissier, inspired by a previous problem of R. Thom, asks whether for any germ of complex analytic hypersurface
there exists a germ of complex algebraic hypersurface with the same topological type. Up to now only the case of germs with an 
isolated singularity is kwown. In this article we answer in the affirmative the case $1$-dimensional singular set, also for
embedded topological types.
\end{abstract}


\maketitle

\section{Introduction}
In 1970 R. Thom~\cite{Th2} formulated the following problem: given any germ of (real or complex) analytic morphism, is there an 
germ of an algebraic morphism with the same weakly stratified structure? He noticed that an eventual affirmative solution would
imply that any germ of analytic set is homeomorphic to a certain germ of algebraic set. 

Inspired in Thom's question, Teissier formulated the following question, (see Question~C in~\cite{Te}) and even proposed an
approach towards its solution: given a complex analytic hypersurface germ, is it homeomorphic to a complex algebraic
hypersurface germ?.
 
In the case of hypersurfaces with an isolated singularity at the origin the answer to Thom's question is affirmative in
a much stronger sense: if $f$ is the equation defining the hypersurface, there is a germ of complex biholomorphism at
the origin which composed with $f$ gives a polynomial. Thus, the hypersurface germ is even analytically isomorphic to an
algebraic one, and the isomorphism may be extended to the ambient.

In the case of non-isolated singularities the question is, up to the author's knowledge, still open. 

Whitney gave in~\cite{Wh} 
examples of germs at the origin of analytic hypersurfaces in $\CC^3$ not analytically isomorphic to an algebraic hypersurface 
germ. One of them is defined by the following equation: 
\[xy(y-x)(y-(3+t)x)(y-e^tx)=0.\]
An easy inspection shows that the singular set of the above hypersurface is the $t$-axis. Thus, if the dimension of the singular
set is at least $1$ we can not expect to have analytic isomorphisms to algebraic germs.

Here we settle Teissier's question for hypersurface germs with $1$-dimensional singular set, by a homeomorphism that even extends
to the ambient.

\begin{theo}
\label{main}
For complex analytic hypersurface germ $(X,x)\subset(\CC^n,x)$ with $1$-dimensional singular set there is a self-homeomorphism of
$(\CC^n,x)$ taking $(X,x)$ to an algebraic hypersurface germ. 

\end{theo}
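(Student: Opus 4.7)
The plan is to prove an \emph{ambient topological finite determinacy} statement: if $f\in\ocn$ defines $(X,x)$ and $f_k$ denotes the Taylor polynomial of $f$ of order $k$, then for all sufficiently large $k$ there is a self-homeomorphism of $(\CC^n,x)$ carrying $f^{-1}(0)$ to $f_k^{-1}(0)$. Since $f_k$ is a polynomial, this would immediately prove the theorem.

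Let $\Sigma=\Sing(X)$, with irreducible branches $\Sigma_1,\dots,\Sigma_r$ through $x$. Along $\Sigma_i\setminus\{x\}$ the hypersurface has a well-defined transverse isolated singularity, whose Milnor number $\mu_i^{\trans}$ is constant on a Zariski-open subset of $\Sigma_i$. Consider the one-parameter deformation
\[ F(z,t):=f(z)+t\bigl(f_k(z)-f(z)\bigr), \qquad t\in\Delta\supset[0,1]. \]
For $k$ large compared with the orders of vanishing of $f$ along each $\Sigma_i$ and with the transverse Milnor numbers, the perturbation $f_k-f\in\mm^k$ is ``higher order'' with respect to every ingredient in the equisingular structure of $f$. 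Using $\mm$-adic bounds on the Jacobian ideal and on the ideals cutting out the $\Sigma_i$, one expects to check that the relative singular set $\Sigma_{\mathrm{rel}}\subset(\CC^n,x)\times\Delta$ is the union of $r$ analytic surfaces meeting transversely only along the $t$-axis $\{x\}\times\Delta$, that the transverse Milnor numbers along each $\Sigma_{i,\mathrm{rel}}$ are independent of $t$, and more generally that the Lê numbers of $F_t$ in the sense of Massey are constant in $t$.

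Constancy of the Lê numbers, together with the very restricted shape of the singular set as a $1$-dimensional analytic curve, should allow the construction of a Whitney stratification of $(\CC^n\times\Delta,F^{-1}(0))$ with strata: the smooth part of $F^{-1}(0)$, the smooth parts of the surfaces $\Sigma_{i,\mathrm{rel}}\setminus(\{x\}\times\Delta)$, the axis $\{x\}\times\Delta$, and the open ambient stratum. The projection to $\Delta$ is then a submersion on each stratum and Thom's $a_F$ condition holds along the axis, so the Thom--Mather first isotopy lemma yields a stratified trivialization of the total space over $\Delta$. Restricting to the endpoints of $[0,1]$ produces the desired ambient homeomorphism $(\CC^n,X,x)\to(\CC^n,f_k^{-1}(0),x)$.

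The main obstacle is precisely the passage from constancy of equisingularity invariants (Lê numbers, transverse Milnor numbers) to ambient Whitney equisingularity of the family. In the isolated case this is the Lê--Ramanujam theorem and its higher-dimensional extensions. Here one needs both a ``transverse Lê--Ramanujam'' statement at generic points of $\Sigma_{\mathrm{rel}}$, which is essentially standard, \emph{and} the verification of Whitney (or at least the Thom $a_F$) regularity along the axis $\{x\}\times\Delta$, where the branches collide; this requires producing a controlled coordinate change straightening $\Sigma_{\mathrm{rel}}$ without destroying the stratification conditions near the origin. Establishing this stratification, particularly the $a_F$ condition at the origin, is the heart of the argument and is where the technical weight of the proof will lie; it should be accessible through the Lê-number and polar-curve machinery available for $1$-dimensional singular loci, but it is far from automatic.
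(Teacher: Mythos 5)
Your opening reduction---that $f$ is ambient-topologically equivalent to its Taylor truncation $f_k$ for $k\gg 0$---is false for non-isolated singularities, and this sinks the whole strategy before the L\^e-number machinery is ever invoked. Take $f=(y-e^x+1)^2+z^2$ in $\CC^3$: the hypersurface $V(f)$ is two smooth sheets crossing along the transcendental curve $C=\{y=e^x-1,\ z=0\}$, so it fails to be a topological manifold along a whole curve through the origin. A direct computation shows that for every $k$ the truncation $f_k$ has an \emph{isolated} critical point at the origin: solving $\partial_z f_k=\partial_y f_k=0$ forces $z=0$, $y=g_{k-1}(x)$ with $g=e^x-1$, and then $\partial_x f_k$ restricts to the derivative of $(g^2)_k-g_{k-1}^2$, a nonzero polynomial in $x$ (its coefficient in degree $2k-2$ is $-1/((k-1)!)^2$). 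Hence $V(f_k)$ is smooth off the origin and cannot be homeomorphic to $V(f)$, let alone ambient-homeomorphic. The underlying reason is structural: the singular locus of an analytic hypersurface germ need not be algebraic, while $\Sing V(f_k)$ always is, and when the Jacobian ideal is not $\mm$-primary no power $\mm^k$ is ``higher order'' in the sense required for determinacy---so your relative singular set $\Sigma_{\mathrm{rel}}$ is not a union of surfaces fibred over $\Delta$ (in the example it is a curve over $t=0$ and a point over $t\neq 0$), the transverse Milnor numbers are not defined for $t\neq0$, and no stratification of the total space can make the projection to $\Delta$ a trivial family.

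The fix is to perturb $f$ only in ways that are guaranteed to preserve the equisingularity type, and this is what the paper does: after a generic choice of $z_1$, the slice $g=f|_{V(z_1)}$ has an isolated singularity, so finite determinacy \emph{does} apply to it and makes $g$ polynomial; then $f$ is recovered as the pullback of the versal unfolding $G$ of $g$ along a path $\gamma$ in the base $\CC^\mu$. Instead of truncating $f$, one replaces $\gamma$ by a nearby path $\beta$ with algebraic power series coordinates (Artin approximation), chosen to stay inside the \emph{same stratum} $B$ of a Whitney stratification of the base making the projection a Thom map; a homotopy from $\gamma$ to $\beta$ inside $B$ plus the Thom--Mather isotopy lemma then gives the ambient homeomorphism, and a generic linear projection turns the resulting algebraic-power-series germ into an honest algebraic hypersurface germ. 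In other words, the admissible perturbations form the stratum $B$, not the coset $f+\mm^k$; your proposal would need to be rebuilt around some such constraint to have a chance of working.
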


Our methods differ from Teissier's proposed approach.
The idea of the proof is a follows: taking a generic coordinate $z_1$ we view the hypersurface as a uniparametric 
deformation of the corresponding hyperplane section through the origin. The hyperplane section has isolated singularities and 
therefore may be represented by a polynomial, and moreover, has a polynomial versal deformation. The uniparametric deformation of
the hyperplane section corresponds to an analytic path $\gamma$ in the base of the versal deformation, and the
original hypersurface is the pullback of the universal family by this path.

Using Artin's approximation Theorem~\cite{Ar}, Hironaka's resolution of singularities and the ideas of Thom's First Isotopy
Lemma~\cite{Th1} we may replace the original hypersurface germ by another one, with the same embedded topological type, for 
which the associated path in the base of the versal deformation is given by algebraic power series. 
This allows to see the hypersurface germ as the local projection of an algebraic variety with perhaps higher embedding 
codimension. A generic projection argument allows to conclude.

Probably the use of Hironaka's resolution of singularities could be 
avoided at the expense of a longer and less clear exposition.

\section{The proof}
Let $(X,O)$ be a germ at the origin of analytic hypersurface in $\CC^n$. Let $f$ be its defining equation. Choose coordinates 
$(z_1,...,z_n)$ of $\CC^n$ such that the slice $V(z_1,f)$ has an isolated singularity at the origin.
The first step of the proof is to view $f$ as a family of functions in $z_2,...,z_n$ parametrised by a parameter $z_1$, and 
to modify the family fibrewise to make $f$ polynomial in the variables $z_2,...,z_n$.

\subsection{} Denote by 
\[g:=f|_{V(z_1)}:(V(z_1),O)\cong (\CC^{n-1},O)\to\CC\]
the restriction of $f$ to $V(z_1)$. Since $g$ has an isolated singularity at the origin, by finite determinacy, there exists
a germ 
\[\phi:(V(z_1),O)\to (V(z_1),O)\]
such that $g\comp\phi$ is a polynomial in $z_2,...,z_n$. Define
\[\psi:(\CC^n,O)\to (\CC^n,O)\]
by $\psi(z_1,z_2,...,z_n):=(z_1,\phi(z_2,...,z_n))$. Redefining $f$ as the composition $f\comp\phi$ we may assume that $g$ is a
polynomial in $z_2,...,z_n$ with an isolated singularity at the origin.

\subsection{} Let $\mu$ be the Milnor number of $g$ at the origin. Consider polynomials $h_1,...,h_\mu$ whose classes form a basis for the
Milnor algebra
\[\frac{\CC\{z_2,...,z_n\}}{(\partial g/\partial z_2,..,\partial g/\partial z_n)}.\]
The unfolding
\begin{equation}
\label{versalunfolding}
G:\CC^{n-1}\times\CC^\mu\to\CC
\end{equation}
defined by 
\[G(z_2,...,z_n,t_1,...,t_\mu):=g(z_2,...,z_n)+\sum_{i=1}^\mu t_ih_i(z_2,...,z_n)\]
is a versal unfolding for $g$.

\subsection{} Viewing $f$ as a $1$-parameter unfolding of $g$ parametrised by $z_1$ and using the fact that the
unfolding~(\ref{versalunfolding}) is versal we find an analytic mapping
\[\gamma:(\CC,0)\to\CC^\mu\]
and a germ of biholomorphism
\[\varphi:(\CC^n,O)\to (\CC^n,O)\]
of the form $\varphi(z_1,z_2,...,z_n)=(z_1,\varphi_{z_1}(z_2,...,z_n))$ such that we have the equality:
\begin{equation}
\label{casi}
f\comp\varphi(z_1,...,z_n)=G(z_2,...,z_n,\gamma_1(z_1),...,\gamma_\mu(z_1)),
\end{equation}
where $(\gamma_1,...,\gamma_\mu)$ is the coordinate expansion of $\gamma$.

Notice that the second term of the above equality is polynomial in all variables except possibly in $z_1$. 
We redefine $f$ to be equal to $f\comp\varphi$. In the rest of the proof we will modify $f$ so that it becomes a polynomial
defining a hypersurface with the same embedded topological type at the origin.

\subsection{}\label{whitney}
Denote by $Z\subset\CC^{n-1}\times\CC^\mu$ the hypersurface defined by $G=0$. 
Consider the projection
\begin{equation}
\label{proyeccion}
\pi:(\CC^{n-1}\times\CC^\mu,Z)\to\CC^\mu.
\end{equation}
Consider Whitney stratifications $\calA$ and $\calB$ of $\CC^{n-1}\times\CC^\mu$ and $\CC^\mu$ respectively, which make
$\pi$ a Thom stratified mapping (see~\cite{GLPW}, Chapter~1) with the following additional properties:
\begin{enumerate}[(i)]
\item the set $Z$ is a union of strata of $\calA$,
\item the origin $\{O\}$ is a stratum of $\calB$,
\item the strata of $\calA$ and $\calB$ are locally closed algebraic subsets.
\end{enumerate}

\subsection{}\label{artin} Let $C$ be the curve germ given by the image of $\gamma$. Let $d$ be the degree of the mapping 
\[\gamma:(\CC,0)\to (C,O).\]
There exists an analytic parametrisation 
\[\alpha:(\CC,O)\to (C,O)\]
such that $\gamma(z_1)=\alpha(z_1^d)$.

Let $B$ be the stratum of $\calB$ containing the generic point of $C$. Since $\overline{B}$ is an algebraic subset of $\CC^\mu$,
by Artin's Approximation Theorem~\cite{Ar}, for any positive integer $N$ there exists a parametrised curve
\[\beta:(\CC,O)\to (D,O)\subset\CC^\mu\]
such that $\beta_i(\CC)$ is contained in $\overline{B}$ and
whose coordinate functions $\beta_i$ are {\em algebraic power series} have a Taylor expansion coinciding with the Taylor 
expansion of the coordinate functions $\alpha_i$ up to degree $N$. Denote by $(D,O)$ the germ given by $(\beta(\CC),O)$.

\subsection{}\label{hironaka} Consider the algebraic subset of $\CC^\mu$ defined by $B':=\overline{B}\setminus B$. 
Since $C$ is a curve not contained in $B'$, there is a sequence of blowing ups in smooth centers 
\begin{equation}
\label{blowup}
\sigma:Y\to\CC^{\mu}
\end{equation}
such that:
\begin{enumerate}
\item The algebraic subset $\overline{B}$ is desingularised.
\item The strict transforms $\tilde{B}'$ and $\tilde{C}$ of $B'$ and $C$ are disjoint.  
\item If $\tilde{C}$ and $\tilde{B}$ are the strict transforms of $C$ and $\overline{B}$ by $\sigma$ and $E$
denotes the exceptional divisor, the germ of the triple $(\tilde{B},\tilde{B}\cap E,\tilde{C})$ at $\tilde{\alpha}(0)$ is 
biholomorphic to the germ of $(\CC^k,V(x_1),V(x_2,...,x_k))$ at the origin of $\CC^k$, where $(x_1,...,x_k)$ are the coordinates
of $\CC^k$ (here $k$ is the dimension of $B$).
\end{enumerate}
In order to achieve this we use Hironaka's Theorem on resolution of singularities~\cite{Hi}:
the composition $\sigma_1$ of a sequence of blowing ups in smooth centers located at the singular 
set produces an embedded desingularisation of $C\cup B'$. Since the total transform of $C\cup B'$ is normal crossings,
if the strict transform of $C$ meets the strict transform of $B'$ then one more blowing up at the meeting point
pulls them appart. We redefine $\sigma_1$ to include also the last blowing up if it is needed.
After doing this we desingularise the strict transform of $\overline{B}$ by $\sigma_1$ by the composition $\sigma_2$ of a 
sequence of blowing ups in smooth centers. The composition $\sigma_1\comp\sigma_2$ has properties~(1)~and~(2). 
Finally, since the generic point of $C$ belongs to the smooth locus $\overline{B}$, the strict transform of $C'$ by 
$\sigma_1\comp\sigma_2$ meets the exceptional divisor $E'$ of $\sigma_1\comp\sigma_2$ at a single point. There is an embedded 
desingularisation $\sigma_3$ of $C'\cup E'$ by a sequence of blowing ups at smooth centers such that
$\sigma:=\sigma_1\comp\sigma_2\comp\sigma_3$ has all the required properties.
  
\subsection{}\label{homotopy} Let $\tilde{\alpha}$ denote the lifting of the parametrisation $\alpha$ by $\sigma$.
If the number $N$ in~\ref{artin} is taken large enough then the generic point of the curve $D$ is contained in 
the smooth locus of $\overline{B}$ and does not meet $B'$. Then we may consider the lifting $\tilde{\beta}$ of the parametrisation
$\beta$ by the transformation $\sigma$. 
We can take $N$ large enough so that we have the equality $\tilde{\beta}(0)=\tilde{\alpha}(0)$ 
and moreover, if $(x_1,...,x_k)$ are the local coordinates of $\tilde{B}$ at $\tilde{\alpha}(0)$ given in Property~(3) then
the Taylor expansion of $x_i(\tilde{\alpha})$ and $x_i(\tilde{\beta})$ coincide up to order $1$.
In this situation both $\tilde{\alpha}$ and $\tilde{\beta}$ are parametrised curves through $\tilde{B}$ meeting transversely
the exceptional divisor $E$: in coordinates $(x_1,...,x_k)$, the Taylor development up to degree $1$ of both mappings
is given by $\alpha(t)=(\lambda t,0,...,0)$ with $\lambda\neq 0$, and the exceptional divisor is $x_1=0$.

Since $\tilde{\alpha(0)}$ does not belong to $\tilde{B}'$, we may choose a sufficiently small radius $\delta$ such that the
restriction
\[\tilde\alpha:\overline{D}_\delta\to \tilde{B}\]
\[\tilde\beta:\overline{D}_\delta\to \tilde{B}\]
of the parametrisations to the closed disc $\overline{D}_\delta$ of radius $\delta$ satisfies that
\[\tilde{\alpha}(\overline{D}_\delta)\cap E=\{\tilde{\alpha}(0)\},\quad\quad\tilde{\alpha}(\overline{D}_\delta)\cap\tilde{B}'=\emptyset,\]
\[\tilde{\beta}(\overline{D}_\delta)\cap E=\{\tilde{\alpha}(0)\},\quad\quad\tilde{\beta}(\overline{D}_\delta)\cap\tilde{B}'=\emptyset,\]
\[\tilde{\alpha}(\overline{D}_\delta)\cap\tilde{\beta}(\overline{D}_\delta)=\{\tilde{\alpha}(0)\}.\]
The last property is only satisfied if the germs $(C,O)$ and $(D,O)$ are different. Otherwise we know that, after
reparametrisation, the curve $\alpha$, and hence $\gamma$, is given by an algebraic power series. If this is the case then we jump
to Step~\ref{final}.

After this choice it is easy to find a smooth homotopy
\begin{equation}
\label{homotopiaarriba}
\tilde{H}:\overline{D}_\delta\times [0,1]\to \tilde{B}
\end{equation}
such that it is an inmersion at any point of $(\overline{D}_\delta\setminus\{O\})\times [0,1]$, its restriction to
$\overline{D}_\delta\setminus\{0\}\times [0,1]$ is injective, and we have the equalities
\[\tilde{H}(t,0)=\tilde{\alpha}(t),\quad\quad\tilde{H}(t,1)=\tilde{\beta}(t),\]
\[\tilde{H}(0,u)=\tilde{\alpha}(0)=\tilde{\beta}(0)\quad\text{for any}\quad u\in [0,1],\]
and the inclussion 
\[\tilde{H}(\overline{D}_\delta\setminus\{0\}\times [0,1])\subset \tilde{B}\setminus (E\cup \tilde{B}').\]

We push down the homotopy by $\sigma$ and define the smooth homotopy
\begin{equation}
\label{homotopiaabajo}
H:\overline{D}_\delta\times [0,1]\to B
\end{equation}
which satisfies:
\begin{enumerate}[(a)]
\item $H(t,0)=\alpha(t)$, $H(t,1)=\beta(t)$.
\item $H(0,u)=O$ for any $u\in [0,1]$.
\item The restriction of $H$ to $\overline{D}_\delta\setminus\{0\}\times [0,1]$ is an injective inmersion and we have the 
inclussion $H((\overline{D}_\delta\setminus\{0\})\times [0,1])\subset B$.
\end{enumerate}

\subsection{}\label{toptriv} Let $\epsilon$ be a positive radius such that $V(g)$ meets transversely the sphere of radius
$r$ centered at the origin of $\CC^{n-1}$ for any $0<r\leq\epsilon$. Let $\overline{B}_\epsilon$ denote the closed ball of radius 
$\epsilon$ in $\CC^{n-1}$ centered at the origin. For an small neighbourhood $S$ of the origin in $\CC^\mu$
the fibre $G^{-1}(0,s)$ meets transversely the sphere $\partial\overline{B}_\epsilon$. The restriction
\begin{equation}
\label{proyeccionrestringida}
\pi|_{\overline{B}_\epsilon\times S}:(\overline{B}_\epsilon\times S,Z)\to\CC^\mu.
\end{equation}
of the projection~(\ref{proyeccion}) becomes proper. The restrictions $\calA'$ and $\calB'$ 
of the Whitney stratifications $\calA$ and $\calB$ to 
$\overline{B}_\epsilon\times S$ and $S$ are still Whitney stratifications which make $\pi|_{\overline{B}_\epsilon\times S}$ a
Thom stratified mapping and have have Property~(i) and ~(ii) of~\ref{whitney}.

\subsection{} Let 
\begin{equation}
\label{pullback}
\rho:(\overline{B}_\epsilon\times\overline{D}_{\delta^{1/d}}\times [0,1],Z')\to \overline{D}_{\delta^{1/d}}\times [0,1]
\end{equation}
be the pullback of mapping~(\ref{proyeccionrestringida}) by $H$.
We fibre the pair $(\overline{B}_\epsilon\times\overline{D}_{\delta^{1/d}}\times [0,1],Z')$ over $[0,1]$ by the projection to the
third factor. Denote by $Z'_u$ the fibre of $Z'$ over $u\in [0,1]$ by this projection.

\begin{lema}
\label{isotopy}
There exists a family of
homeomorphisms 
\[h_u:\overline{B}_\epsilon\times H(\overline{D}_{\delta}\times\{0\})\to
\overline{B}_\epsilon\times H(\overline{D}_{\delta}\times\{u\})\]
for any $u\in [0,1]$ continuously depending on the parameter $u$ and preserving the stratification $\calA'$.
\end{lema}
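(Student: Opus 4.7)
The plan is to produce the family $h_u$ as the push-forward through $H$ of a stratum preserving topological trivialisation of a proper Thom stratified mapping onto $[0,1]$.

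First I would pull back the stratified mapping $\pi|_{\overline{B}_\epsilon\times S}$ along the smooth map
\[\Phi:\overline{B}_\epsilon\times\overline{D}_\delta\times [0,1]\to\overline{B}_\epsilon\times S,\qquad \Phi(p,t,u):=(p,H(t,u)),\]
to endow the left hand side with a partition $\tilde{\calA}$ with strata of two kinds: the preimages $\Phi^{-1}(A)$ of strata $A$ of $\calA'$ lying above the stratum $B$ of $\calB'$ (these sit entirely in $\{t\neq 0\}$), and the products $A''\times\{0\}\times [0,1]$ for strata $A''$ of $\calA'$ lying in the special fibre above $\{O\}$. By the properties of $H$ collected in~\ref{homotopy}, $H$ sends $\{0\}\times [0,1]$ into $\{O\}$ and is an injective immersion off this axis into $B$, so the image of $H$ meets only these two strata of $\calB'$ and the indicated partition exhausts the total space. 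I would also check that the projection $p_{[0,1]}$ to the third factor is a submersion on each stratum of $\tilde{\calA}$: on the axial strata by the product form, and on each $\Phi^{-1}(A)$ by combining the submersion $\pi|_A:A\to B$ with the smooth dependence of $H$ on $u$ and the immersivity of $H$ off the axis.

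The main technical step, which I expect to be the principal obstacle, is showing that $\tilde{\calA}$ is Whitney regular and makes $p_{[0,1]}$ a Thom stratified mapping. Over $\{t\neq 0\}$ this is a transverse pullback of the Whitney stratification $\calA'$, so the conditions are inherited. The delicate regularity is that between a stratum $\Phi^{-1}(A)$ (approaching the axis as $t_n\to 0$) and an axial stratum $A''\times\{0\}\times [0,1]$; this should follow from the Thom $a_\pi$ condition of $\pi$ along the stratum $\{O\}\in\calB'$ guaranteed by~\ref{whitney} and~\ref{toptriv}, transported along the cylindrical homotopy $H$.

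Once Whitney regularity and the Thom condition are in hand, $p_{[0,1]}$ is a proper stratified submersion (properness follows from compactness of $\overline{B}_\epsilon\times\overline{D}_\delta$), and Thom's First Isotopy Lemma yields a continuous family of stratum preserving homeomorphisms
\[\tilde{h}_u:\overline{B}_\epsilon\times\overline{D}_\delta\times\{0\}\to\overline{B}_\epsilon\times\overline{D}_\delta\times\{u\}.\]
Each $H(\cdot,u):\overline{D}_\delta\to H(\overline{D}_\delta\times\{u\})$ is a continuous bijection between compact Hausdorff spaces and hence a homeomorphism, so transporting $\tilde{h}_u$ through $\Phi$ gives the desired homeomorphisms $h_u$ between slices of $\overline{B}_\epsilon\times S$. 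Preservation of $\calA'$ is automatic because $\tilde{h}_u$ preserves $\tilde{\calA}$ and $\Phi$ sends strata of $\tilde{\calA}$ into strata of $\calA'$ by construction.
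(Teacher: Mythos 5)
Your overall strategy --- trivialise over the interval $[0,1]$ using Thom--Mather isotopy technology and then transport the trivialisation through $H$ --- is the right one, but the step you yourself flag as ``the main technical step'' is a genuine gap, and it is precisely the point where the paper's proof takes a different (and safer) route. You need the pulled-back partition $\tilde{\calA}$ of $\overline{B}_\epsilon\times\overline{D}_\delta\times[0,1]$ to be a Whitney stratification making the projection to $[0,1]$ a proper Thom stratified submersion; otherwise the First Isotopy Lemma does not apply. Over $\{t\neq 0\}$ the transverse-pullback argument is fine, but for the pair consisting of a stratum $\Phi^{-1}(A)$ and an axial stratum $A''\times\{0\}\times[0,1]$ nothing you invoke delivers conditions (a) and (b). The Thom $a_\pi$ condition controls limits of the tangent spaces of the \emph{fibres} $\ker d(\pi|_A)$, not limits of $T\Phi^{-1}(A)=(D\Phi)^{-1}(TA)$; already to see that $\partial/\partial u$ lies in the limiting planes you would need, for the vectors $\partial H/\partial u(t_n,u_n)\in TB$ (which tend to $0$ as $t_n\to 0$), lifts $(a_1^n,\partial H/\partial u)\in TA$ with $a_1^n\to 0$, i.e.\ a uniformly bounded right inverse for $d(\pi|_A)$ as $A$ approaches the fibre over $O$ --- and that is not guaranteed by Whitney regularity of $\calA'$ or by the Thom condition. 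Since $H$ is merely a smooth homotopy built by hand, there is no curve-selection or analyticity argument available either. So the claim is not established, and I do not see that it is even true in the stated generality.

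The paper circumvents exactly this difficulty by never stratifying the pulled-back total space. It pushes the vector field $\partial/\partial u$ forward by $H$ to a continuous vector field $\calY_0$ on the compact set $H(\overline{D}_\delta\times[0,1])$ (well defined because $H$ is an injective immersion off the axis and $DH$ kills $\partial/\partial u$ along it, forcing $\calY_0(O)=0$), extends $\calY_0$ to a continuous stratified vector field $\calY$ on a neighbourhood in $\CC^\mu$ that is weakly controlled with respect to $\calB'$, lifts $\calY$ by Theorem~II.3.2 of~\cite{GLPW} --- this is where the Thom condition on $\pi$ is actually used --- to a stratified vector field $\calZ$ controlled over $\calY$ and tangent to the strata of $\calA'$, and integrates $\calZ$ using Section~I.4 of~\cite{GLPW}. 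In other words, one only ever needs a \emph{controlled lift of a single vector field}, which is a strictly weaker input than the Whitney regularity of a new stratification of the cylinder. I would recommend reorganising your argument along these lines rather than trying to prove the regularity of $\tilde{\calA}$.
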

\begin{proof}
We follow the ideas of the proof of Thom's First Isotopy Lemma given in~\cite{GLPW}.

Consider the vector field $\calX:=(0,0,\partial/\partial u)$ in $(\overline{D}_{\delta^{1/d}}\setminus\{0\})\times [0,1]$ 
(where $u$ is the coordinate of $[0,1]$). Let $TM$ denote the tangent bundle of any manifold $M$. Let
\[DH:T(\overline{D}_\delta\times [0,1])\to T\CC^\mu\]
be the derivative of $H$. Since $H$ is an injective inmersion at $\overline{D}_\delta\setminus\{0\}\times [0,1]$ and we have the 
vanishing $DH(x)(\calX(x))=0$ for any $x\in\{0\}\times [0,1]$ there exists a unique continuous section 
\[\calY_0\in\Gamma(H(\overline{D}_\delta\times [0,1]),T\times\CC^\mu)\]
such that $DH(x)(\calX(x))=\calY_0(H(x))$ for any $x\in\overline{D}_\delta\times [0,1]$. Notice that $\calY_0(O)=0$ and that
$\calY_0$ is tangent to $B$ at any $H(x)$ with $x\in\overline{D}_\delta\setminus\{0\}\times [0,1]$. 

Since the origin is a stratum of $\calB'$ it is easy to construct an extension $\calY$ of $\calY_0$ to a continuous 
vector field defined over a neighbourhood $U$ of the compact set $H(\overline{D}_\delta\times [0,1])$ in $\CC^\mu$ which is 
stratified and weakly controlled with respect to $\calB'$ (see Definition~I.3.1. of~\cite{GLPW}).

As $\pi|_{\overline{B}_\epsilon\times S}$ is a Thom stratified mapping and $\calY$ is weakly controlled, Theorem~II.3.2 
of~\cite{GLPW} allows to construct a vector field $\calZ$ over $(\pi|_{\overline{B}_\epsilon\times S})^{-1}(U)$ which lifts
$\calY$ and is stratified for $\calA'$ and controlled over $\calY$.

The results of Section~I.4~of~\cite{GLPW} allow to integrate $\calZ$ to a continuous flow
\[h:\overline{B_\epsilon}\times H(\overline{D}_\delta\times\{0\})\times [0,1]\to
\overline{B_\epsilon}\times H(\overline{D}_\delta\times [0,1])\]
which preserves the stratification $\calA'$. The required family of homeomorphisms is given by $h_u:=h(\centerdot,u)$. 
\end{proof}

\subsection{} Define the path
\[\tau:\overline{D}_{\delta^{1/d}}\to B\setminus\CC^\mu\]
by $\tau(z_1):=\beta(z_1^d)$. Since the coordinate functions of $\beta$ are algebraic power series at the origin, the
same holds for the coordinate functions of $\tau$.

Define a self-homeomorphism $\theta$ of ${D}_{\delta^{1/d}}\times\overline{B}_\epsilon\subset\CC^n$ by the formula
\[\theta(z_1,z_2,...,z_n):=(z_1,h_1(z_2,...,z_n,z_1^d)),\]
where $h_1$ is the homeomorphism predicted in Lemma~\ref{isotopy} for the parameter value $u=1$. 
Since $h_1$ preserves the stratification $\calA'$, and
the hypersurface $Z\subset\CC^{n-1}\times\CC^\mu$ is a union of strata of $\calA'$,
the hypersurface $X\subset\overline{D}_{\delta^{1/d}}\to B\setminus\CC^\mu$ is sent by $h_1$ to the hypersurface 
$X'\subset\overline{D}_{\delta^{1/d}}\to B\setminus\CC^\mu$ given by the pulback by $\tau$ of $Z$.

\subsection{}\label{final} In the previous steps we have reduced to the case in which the path
\[\gamma:(\CC,0)\to\CC^\mu\]
associated to $(f,z_1)$ in the base of the versal deformation of associated to $g$ is given by algebraic power series. This means
that if $(\gamma_1(z_1),...,\gamma_\mu(z_1))$ are the coordinate power series of $\gamma$ there exist polynomials in two variables
$P_1(y_1,z_1)$,...,$P_\mu(y_\mu,z_1)$ such that $P(\gamma_i(z_1),z_1)$ vanishes identically for any $1\leq i\leq\mu$. 

Consider coordinates $(y_1,...,y_\mu,z_1)$ in $\CC^{\mu}\times\CC$ and let $G$ be the curve defined by the ideal
$(P_1,...,P_\mu)$. Consider the natural projections
\[pr_1:\CC^{\mu}\times\CC\times\CC^{n-1}\to\CC^{\mu}\times\CC,\]
\[pr_2:\CC^{\mu}\times\CC\times\CC^{n-1}\to\CC^{\mu}\times\CC^{n-1},\]
\[pr_3:\CC^{\mu}\times\CC\times\CC^{n-1}\to\CC\times\CC^{n-1}.\]
Define the pure $(n-1)$-dimensional algebraic subset $Z'$ in $\CC^{\mu}\times\CC\times\CC^{n-1}$ by
\[Z':=pr_1^{-1}(G)\cap pr_2^{-1}(Z).\]  
Notice that, at the level of analytic germs, the restriction
\[pr_3|_{Z'}:(Z',O)\to (X,O)\]
is an isomorphism. If we could ensure that the restriction $pr_3|_{Z'}$ is proper and that the origin 
$O\in\CC^{\mu}\times\CC\times\CC^{n-1}$ is the only preimage of the origin 
$O\in\CC\times\CC^{n-1}$ by $pr_3|_{Z'}$ then we would be done, because then the germ $(X,0)$ would coincide with the algebraic 
hypersurface germ $(pr(Z'),O)$.

We will modify the projection $pr_3$ so we get the desired properties. Observe that the embedding dimension of
the germ $(Z',O)$ is $n$, since it is analytically isomorphic to the hypersurface germ $(X,O)$. Therefore its Zariski tangent
space $T$ is $n$-dimensional. Furthermore, observe that the projection $pr_3$ maps the Zariski tangent space $T$ isomorphically.
By dimension reasons, a generic projection
\[pr_4:\CC^{\mu}\times\CC\times\CC^{n-1}\to\CC^n\]
satisfies that it maps the Zariski tangent space $T$ isomorphically and that the restriction $pr_4|_{Z'}$ is proper and only 
has the origin in $\CC^{\mu}\times\CC\times\CC^{n-1}$ as preimage of the origin in $CC\times\CC^{n-1}$.

Let $(W,O)\subset\CC^{\mu}\times\CC\times\CC^{n-1}$ be a germ of smooth $n$-dimensional analytic space containing $(Z',O)$ and
such that its tangent space at the origin is equal to $T$. The fact that a projection $p$ maps
the Zariski tangent space $T$ isomorphically implies that it induces an analytic isomorphism of germs of pairs
\[p|_{W}:(W,Z',O)\to (\CC^n,p(Z'),O).\]
Applying this to the projections $pr_3|_{W}$ and $pr_4|_{W}$ we find that the germ $(\CC^n,X,O)$ is analytically isomorphic
to $(\CC^n,pr_4(Z'),O)$, which is an algebraic hypersurface germ. This concludes the proof.


\begin{thebibliography}{99}
\bibitem{Ar} M. Artin. {\em Algebraic approximation of structures over complete local rings}. Publ. Math. I.H.E.S. {\bf 36} (1969), 23-58.
\bibitem{GLPW} C.G. Gibson, E. Looijenga, A. du Plessis, K. Wirthm\"uller. {\em Topological stability of smooth mappings}.
Lecture Notes in Mathematics~\textbf{552}, Springer, (1976).
\bibitem{Hi} H. Hironaka. {\em Resolution of singularities of an algebraic variety over a field of characteristic zero. I, II.} 
Ann. of Math. (2) \textbf{79} (1964), 109-203, 205-326.
\bibitem{Te} B. Teissier. {\em Introduction to equisingularity problems.} Algebraic geometry, Humboldt State Univ.,
Arcata, Calif., 1974. Proc. Sympos. Pure Math., \textbf{29}, (1975), 593-632.
\bibitem{Th1} R. Thom. {\em Ensembles et morphismes stratifi\'es}. Bull. Amer. Math. Soc. \textbf{75} (2), (1969), 240-284.
\bibitem{Th2} R. Thom. {\em In: Problems concerning manifolds}. Manifolds, Amsterdam 1970. Springer Lecture Notes in Mathematics
\textbf{197}.
\bibitem{Wh} H. Whitney. {\em Local properties of analytic varieties}. Differential and Combinatorial Topology
(A Symposium in Honor of Marston Morse). Princeton Univ. Press, Princeton, N. J., (1965), 205-244. 

\end{thebibliography}
\end{document}